\newtheorem{lemma}{Lemma}
\newtheorem{thm}{Theorem}
\newtheorem{definition}{Definition}
\newtheorem{example}{Example}
\numberwithin{equation}{section}
\begin{document}
\leftline{ \scriptsize \it}
\title[]
{Approximation of function by $\alpha-$Baskakov Durrmeyer type operators}
\maketitle
\begin{center}
{\bf  Jaspreet Kaur$^1$ and Meenu Goyal$^1$}
\vskip0.2in
$^{1}$School of Mathematics\\
Thapar Institute of Engineering and Technology, Patiala\\
Patiala-147004, India\\
\vskip0.2in
jazzbagri3@gmail.com and meenu\_rani@thapar.edu
\end{center}
\begin{abstract}
In the present note, we give the generalization of $\alpha-$Baskakov Durrmeyer operators depending on a real parameter $\rho>0.$ We present the approximation results in Korovkin and weighted Korovkin spaces. We also prove the order of approximation, rate of approximation for these operators. In the end, we verify our results with the help of numerical examples by using Mathematica.\\


Keywords: Baskakov operators, Korovkin-type approximation theorem, Rate of convergence, Modulus of continuity. \\
Mathematics Subject Classification(2020): 41A25, 41A35, 41A36, 26A15.
\end{abstract}
\section{\bf{{Introduction}}}
Weierstrass's approximation theorem is the stepping stone of approximation theory, proved by Weierstrass in 1885. In 1912, S. N. Bernstein presented an another method to prove the theorem, which is quite simpler than the original proof. For this, he defined positive and linear operators (named after him) as Bernstein operators. For $f\in C[0,1],$ the operators  are defined as:
$$\mathcal{B}_n(f,x)=\sum_{k=0}^np_{n,k}(x)f\left(\frac{k}{n}\right), $$
where $p_{n,k}(x)= \displaystyle {n \choose k}  x^k  (1-x)^{n-k}.$ These operators enjoy remarkable properties of approximation theory. Due to its useful properties,  immense research has been initiated on these operators, which is still going on (one can see \cite{BER},\cite{PLB},\cite{DT},\cite{CAM}). We can also find its generalizations based on different parameters (see \cite{lambe},\cite{CHE},\cite{two},\cite{QBER},\cite{stan}). Despite having all these interesting properties, the operators are defined only on [0,1]. Then in 1950, Sz$\acute{a}$sz \cite{szs} extended it to the infinite interval by introducing Sz$\acute{a}$sz operators.\\
In 1957, Baskakov \cite{BAS} acquainted a sequence of positive linear operators for $f\in C_B[0,\infty)$ i.e. continuous bounded functions on $[0,\infty)$ as:
\begin{eqnarray}
L_n(f;x)=\sum_{k=0}^{\infty} f\left(\frac{k}{n}\right)l_{n,k}(x),     \qquad     n \geq 1,\qquad x \in [0,\infty)
\end{eqnarray}
where $l_{n,k}(x)=\displaystyle {n+k-1 \choose k}\displaystyle \frac{x^k}{(1+x)^{n+k}}$.\\
 Due to the extension of these operators on unbounded interval, it fetched the attention of many researchers \cite{Baka}, \cite{kanb}, \cite{Bask}, \cite{Ba}, \cite{bas}.\\
Recently, Aral and Erbay \cite{albas} introduced the generalization of classical Baskakov operators based on a real parameter $\alpha\in [0,1]$, as :
\begin{eqnarray}
L_{n}^{\alpha}(f;x)= \sum_{k=0}^{\infty} f\left(\frac{k}{n}\right).p_{n,k}^{\alpha}(x), \qquad        n \geq 1, \qquad x \in [0,\infty)
\end{eqnarray}
where $p_{n,k}^{\alpha}(x)$ is given by:
\begin{eqnarray}
p_{n,k}^{\alpha}(x)=\frac{x^{k-1}}{(1+x)^{n+k-1}} \left\{\frac{{\alpha}x}{1+x}{n+k-1 \choose k}-(1-\alpha)(1+x){n+k-3 \choose k-2}+(1-\alpha)x{n+k-1 \choose k}\right\}.
\end{eqnarray}
For $\alpha=1,$ it comes back to the classical Baskakov operators.\\
The authors studied convergence, error approximation and Voronovskaja type result for these operators. They verified that the parameter $\alpha$ does not affect the convergence but it effects the error of approximation for these operators.\\
 Very recently, Nasiruzzaman et al. \cite{dalba} defined the Durrmeyer variant of these operators to extend the results on Lebesgue integral functions, called $\alpha-$Baskakov Durrmeyer operators as:
\begin{eqnarray}\label{op}
S_{n}^{\alpha}(f;x)= \sum_{k=0}^{\infty} f\left(\frac{k}{n}\right).p_{n,k}^{\alpha}(x)\int_{0}^{\infty}s_{n,k}(t)f(t)\, dt,\\
\mbox{where}\quad s_{n,k}(t)=\frac{t^k}{B(k+1,n)(1+t)^{n+k+1}}\nonumber
\end{eqnarray}
and $B(m,n)$ is a Beta function.
The authors studied the order of approximation, rate of convergence, Korovkin-type and weighted Korovkin-type approximation theorems for these operators.\\
In order to reduce the error of approximation of the linear positive operators, a great variety of research is going on its generalization which is based on different parameters (as \cite{albas}, \cite{qbas}, \cite{genqbas}, \cite{geba}, \cite{wafi}). Although, generalization does not give better approximation in all the cases. But, if we choose the specific value of the parameter wisely for the certain function, we can achieve the great results.
 By motivated from these benefits of the generalizations of the positive linear operators, which reduces the error of approximation, we modify the operators (\ref{op}) by using a parameter $\rho>0$ in the next section.
\section{Construction of operators}
We define the generalization of $\alpha-$Baskakov Durrmeyer operators based on a real parameter $\rho >0$ as follows:
\begin{eqnarray}\label{ope}
A_{n}^{\alpha,\rho}(f;x)&=&\sum_{k=0}^{\infty} f\left(\frac{k}{n}\right).p_{n,k}^{\alpha}(x)\int_{0}^{\infty}\mu_{n,k}^{\rho}(t)f(t) \,dt,\\
\mbox{where} \quad
\mu_{n,k}^{\rho}(t)&=&\frac{t^{k\rho}}{B(k\rho+1,n\rho)(1+t)^{n\rho+k\rho+1}}.
\end{eqnarray}
The following sections are organized as: In section 3, we give the moments, central moments and some preliminary results that we will use in subsequent sections. In section 4, we show main results for the operators (\ref{ope}) which are divided in two sub-sections. In the first sub-section, we prove basic convergence theorems in Korovkin and weighted Korovkin spaces and in the second sub-section, we study pointwise approximation properties of these operators. In section 5, we verify our theoretical results by numerical examples with the use of Mathematica.\\
Through out the paper, we use $e_i=x^i,$ for $i=0,1,2,\dotsb .$
\section{Basic Results}
\begin{lemma}\label{mome}
The moments of the operators $A_{n}^{\alpha,\rho}(.;x)$ are given as:
\begin{eqnarray*}
A_{n}^{\alpha,\rho}(e_0;x) &=& 1;\\
A_{n}^{\alpha,\rho}(e_1;x) &=& \frac{\rho nx-2\rho(1-\alpha)x+1}{n\rho-1};\\
A_{n}^{\alpha,\rho}(e_2;x) &=& \frac{1}{(n\rho-1)(n\rho-2)}\bigg[x^2\big\{{\rho}^2 n^2+(4\alpha-3){\rho}^2n\big\}\\
&&+x\big\{{\rho}^2(n+4\alpha-4)+3\rho n-6\rho(1-\alpha)\big\}+2\bigg].
\end{eqnarray*}
\end{lemma}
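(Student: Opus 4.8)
The plan is to exploit the product structure of the operator: since $e_i(t)=t^i$ enters only through the integral,
\[
A_{n}^{\alpha,\rho}(e_i;x)=\sum_{k=0}^{\infty} p_{n,k}^{\alpha}(x)\,\gamma_i(k),\qquad \gamma_i(k):=\int_{0}^{\infty} t^i\,\mu_{n,k}^{\rho}(t)\,dt,
\]
so the computation splits cleanly into (a) evaluating the Durrmeyer moments $\gamma_i(k)$ as explicit polynomials in $k$, and (b) evaluating the discrete Baskakov sums $\sum_k k^m p_{n,k}^{\alpha}(x)$ for $m=0,1,2$. I would carry out (a) and (b) separately and assemble them at the very end.

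For (a) I would use the Beta-integral identity $\int_{0}^{\infty} t^{a-1}(1+t)^{-(a+b)}\,dt=B(a,b)$. With $a=k\rho+1+i$ and $b=n\rho-i$ this gives $\gamma_i(k)=B(k\rho+1+i,\,n\rho-i)/B(k\rho+1,\,n\rho)$, which collapses via $\Gamma(z+1)=z\Gamma(z)$ to $\gamma_0(k)=1$, $\gamma_1(k)=\tfrac{k\rho+1}{n\rho-1}$, and $\gamma_2(k)=\tfrac{(k\rho+1)(k\rho+2)}{(n\rho-1)(n\rho-2)}$, valid for $n\rho>2$. In particular $\gamma_0(k)=1$ records that the $\mu_{n,k}^{\rho}$ are probability densities.

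For (b) the obstacle is the unwieldy three-term form of $p_{n,k}^{\alpha}$. The decisive simplification is to recognise it as a combination of classical Baskakov basis functions: a short manipulation of the binomial coefficients in the defining formula yields
\[
p_{n,k}^{\alpha}(x)=\big[1+(1-\alpha)x\big]\,l_{n,k}(x)-(1-\alpha)x\,l_{n,k-2}(x),
\]
with the convention $l_{n,k}\equiv 0$ for $k<0$. I would then feed in the classical Baskakov moment sums $\sum_k l_{n,k}(x)=1$, $\sum_k k\,l_{n,k}(x)=nx$, $\sum_k k^2 l_{n,k}(x)=n^2x^2+nx^2+nx$, together with the index shift $\sum_k \phi(k)\,l_{n,k-2}(x)=\sum_{j\ge 0}\phi(j+2)\,l_{n,j}(x)$, to obtain $\sum_k p_{n,k}^{\alpha}(x)=1$, $\sum_k k\,p_{n,k}^{\alpha}(x)=nx-2(1-\alpha)x$, and $\sum_k k^2 p_{n,k}^{\alpha}(x)=x^2[n^2+(4\alpha-3)n]+x[n+4\alpha-4]$. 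Care is needed at the boundary indices $k=0,1$, where the shifted term drops out, and one checks that the shift $k\mapsto k+2$ is exactly what produces the $(4\alpha-3)$ and $(4\alpha-4)$ corrections.

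Finally I would combine the pieces. For $e_1$, $A_{n}^{\alpha,\rho}(e_1;x)=\tfrac{1}{n\rho-1}\big[\rho\sum_k k\,p_{n,k}^{\alpha}(x)+\sum_k p_{n,k}^{\alpha}(x)\big]$; for $e_2$, expanding $(k\rho+1)(k\rho+2)=\rho^2 k^2+3\rho k+2$ gives $A_{n}^{\alpha,\rho}(e_2;x)=\tfrac{1}{(n\rho-1)(n\rho-2)}\big[\rho^2\sum_k k^2 p_{n,k}^{\alpha}(x)+3\rho\sum_k k\,p_{n,k}^{\alpha}(x)+2\big]$, and substituting the sums from (b) reproduces the stated expressions after routine simplification. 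The bulk of the effort is the algebra in (b); beyond that, the only points requiring attention are justifying the interchange of summation and integration (legitimate by nonnegativity, Tonelli) and keeping the convergence constraint $n\rho>2$.
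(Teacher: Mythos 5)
Your proposal is correct, and in fact the paper states Lemma \ref{mome} without any proof, so your argument fills a genuine gap rather than paralleling an existing one. The two ingredients you use are exactly the standard ones for this kind of Durrmeyer construction: the Beta-integral evaluation $\gamma_i(k)=B(k\rho+i+1,\,n\rho-i)/B(k\rho+1,\,n\rho)$, which indeed collapses to $\gamma_0(k)=1$, $\gamma_1(k)=\frac{k\rho+1}{n\rho-1}$, $\gamma_2(k)=\frac{(k\rho+1)(k\rho+2)}{(n\rho-1)(n\rho-2)}$ for $n\rho>2$, and the decomposition $p_{n,k}^{\alpha}(x)=[1+(1-\alpha)x]\,l_{n,k}(x)-(1-\alpha)x\,l_{n,k-2}(x)$, which I verified term by term against the three-term definition (the $\alpha$-term and the $(1-\alpha)x$-term combine to $[1+(1-\alpha)x]l_{n,k}$, and the middle term is $-(1-\alpha)x\,l_{n,k-2}$); this identity goes back to Aral and Erbay's paper on the $\alpha$-Baskakov basis. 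Your index-shift computation then gives $\sum_k k\,p_{n,k}^{\alpha}(x)=nx-2(1-\alpha)x$ and $\sum_k k^2p_{n,k}^{\alpha}(x)=x^2[n^2+(4\alpha-3)n]+x[n+4\alpha-4]$, and assembling these with $(k\rho+1)(k\rho+2)=\rho^2k^2+3\rho k+2$ reproduces all three stated moments exactly. One remark worth making explicit: the paper's displayed definition (2.1) of $A_n^{\alpha,\rho}$ contains a spurious factor $f(k/n)$ inside the sum (a typo inherited from the definition of $S_n^{\alpha}$); your reading of the operator as $\sum_k p_{n,k}^{\alpha}(x)\int_0^{\infty}\mu_{n,k}^{\rho}(t)f(t)\,dt$ is the only one consistent with the stated moments and with how the paper itself manipulates the operator in its later proofs, so your proof implicitly documents the correct definition as well.
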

\begin{lemma}\label{cenm}
The central moments of the operators $A_{n}^{\alpha,\rho}(.;x)$ are given as:
\begin{eqnarray*}
A_{n}^{\alpha,\rho}(t-x;x) &=& \frac{1}{n\rho-1}\bigg[x\left\{1-2\rho(1-\alpha)\right\}+1\bigg];\\
A_{n}^{\alpha,\rho}((t-x)^2;x) &=& \frac{1}{(n\rho-1)(n\rho-2)}\bigg[x^2\big\{n\rho(\rho+1)-8\rho(1-\alpha)+2\big\}\\
&&+x\big\{n\rho(\rho+1)-4{\rho}^2(1-\alpha)-6\rho(1-\alpha)+4\big\}+2\bigg];\\
A_{n}^{\alpha,\rho}((t-x)^4;x)&=& \frac{{\rho}^2(1+\rho)^2x^2(1+x)^2-96(1-\alpha){\rho}^3x^3+24{\rho}^3x^3}{(n\rho-1)(n\rho-2)(n\rho-3)(n\rho-4)}n^2+O\left(\frac{1}{n^3}\right).
\end{eqnarray*}
\end{lemma}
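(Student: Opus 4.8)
The plan is to exploit the linearity of $A_n^{\alpha,\rho}$ together with the binomial expansion $(t-x)^m=\sum_{j=0}^m\binom{m}{j}(-x)^{m-j}t^j$, so that each central moment becomes a finite combination of the ordinary moments $A_n^{\alpha,\rho}(e_j;x)$. For $m=1$ and $m=2$ the only ingredients are the three moments recorded in Lemma \ref{mome}, so these two identities reduce to algebra. For the first central moment I would write $A_n^{\alpha,\rho}(t-x;x)=A_n^{\alpha,\rho}(e_1;x)-x\,A_n^{\alpha,\rho}(e_0;x)$, substitute $A_n^{\alpha,\rho}(e_0;x)=1$ and the value of $A_n^{\alpha,\rho}(e_1;x)$ from Lemma \ref{mome}, and place everything over the common denominator $n\rho-1$; the $\rho n x$ term cancels against $-x(n\rho-1)$, leaving $x\{1-2\rho(1-\alpha)\}+1$ in the numerator, which is the claimed form.

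For the second central moment I would expand $A_n^{\alpha,\rho}((t-x)^2;x)=A_n^{\alpha,\rho}(e_2;x)-2x\,A_n^{\alpha,\rho}(e_1;x)+x^2$, bring the three terms over the common denominator $(n\rho-1)(n\rho-2)$, and collect the coefficients of $x^2$, $x$, and the constant. The only care needed here is matching denominators: the $e_1$ moment must be multiplied by $(n\rho-2)$ and the $x^2$ term by $(n\rho-1)(n\rho-2)$ before collecting like powers of $x$.

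The genuinely harder step is the fourth central moment, because its expansion $A_n^{\alpha,\rho}((t-x)^4;x)=\sum_{j=0}^4\binom{4}{j}(-x)^{4-j}A_n^{\alpha,\rho}(e_j;x)$ requires the third and fourth ordinary moments, which are not recorded in Lemma \ref{mome}. I would therefore first produce $A_n^{\alpha,\rho}(e_3;x)$ and $A_n^{\alpha,\rho}(e_4;x)$ from scratch. The integral factor is a Beta integral: substituting $\mu_{n,k}^\rho$ and using $\int_0^\infty t^{a-1}(1+t)^{-a-b}\,dt=B(a,b)$ gives $\int_0^\infty\mu_{n,k}^\rho(t)\,t^j\,dt=\frac{B(k\rho+j+1,\,n\rho-j)}{B(k\rho+1,\,n\rho)}=\frac{(k\rho+1)(k\rho+2)\cdots(k\rho+j)}{(n\rho-1)(n\rho-2)\cdots(n\rho-j)}$, a rational function of $k\rho$ of degree $j$. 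Summing the resulting polynomials in $k$ against $p_{n,k}^\alpha(x)$ reduces every moment to the discrete sums $\sum_k k^i\,p_{n,k}^\alpha(x)=n^i L_n^\alpha(e_i;x)$ of the underlying $\alpha$-Baskakov operators, which are polynomials in $x$ of degree $i$.

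Assembling these into the binomial sum and clearing the denominator $(n\rho-1)(n\rho-2)(n\rho-3)(n\rho-4)$, I expect massive cancellation among the leading powers of $n$, mirroring the cancellations seen in the first two central moments; the surviving top-order contribution is of order $n^2$ relative to the quartic denominator, which after simplification leaves the stated coefficient $\rho^2(1+\rho)^2x^2(1+x)^2-96(1-\alpha)\rho^3x^3+24\rho^3x^3$ over $(n\rho-1)(n\rho-2)(n\rho-3)(n\rho-4)$, with all lower-order contributions absorbed into $O(1/n^3)$. The main obstacle is precisely this bookkeeping: tracking which monomials in $x$ and $n$ survive to order $n^2$ and verifying that everything below is genuinely $O(1/n^3)$, rather than the Beta-integral evaluation, which is routine once the substitution is set up.
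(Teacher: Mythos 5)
The paper states Lemma \ref{cenm} with no proof at all, so there is no argument of the authors' to compare against; your route (linearity plus the binomial expansion $(t-x)^m=\sum_j\binom{m}{j}(-x)^{m-j}t^j$, with the ordinary moments supplied by Lemma \ref{mome} and by Beta integrals) is the natural one and is surely what is intended. For the first and second central moments your plan is correct and complete in substance: I checked the algebra, and the $n^2$-terms do cancel exactly as you describe, leaving the stated numerators. Your Beta-integral identity $\int_0^\infty\mu_{n,k}^\rho(t)\,t^j\,dt=\frac{(k\rho+1)\cdots(k\rho+j)}{(n\rho-1)\cdots(n\rho-j)}$ is also correct, and you rightly (if silently) read the operator as the pure Durrmeyer form $\sum_k p_{n,k}^\alpha(x)\int_0^\infty\mu_{n,k}^\rho(t)f(t)\,dt$, discarding the stray factor $f(k/n)$ in the displayed definition (\ref{ope}), without which Lemma \ref{mome} itself would be false.

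The genuine gap is the fourth central moment, which is the only non-routine claim in the lemma, and which you do not prove: you set up the correct machinery and then simply ``expect'' the cancellations to produce the stated numerator. This is not a harmless omission, because there is concrete evidence that an honest computation would \emph{not} produce it. Specialize to $\rho=1$, $\alpha=1$: the operator becomes (up to an index shift) the classical Baskakov--Durrmeyer operator, whose fourth central moment is known to satisfy $\mu_4(x)=\frac{12x^2(1+x)^2}{n^2}+O(n^{-3})$ — equivalently, the leading coefficient is $3\sigma^4(x)$ where the second central moment is $\frac{\sigma^2(x)}{n}+O(n^{-2})$, here $\sigma^2(x)=\frac{(\rho+1)}{\rho}x(1+x)$. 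The paper's formula at $\rho=1$, $\alpha=1$ instead gives $\frac{4x^2(1+x)^2+24x^3}{n^2}+O(n^{-3})$, which does not equal $\frac{12x^2(1+x)^2}{n^2}$; the general leading term ought to be $3\rho^2(1+\rho)^2x^2(1+x)^2$ rather than $\rho^2(1+\rho)^2x^2(1+x)^2-96(1-\alpha)\rho^3x^3+24\rho^3x^3$. So to prove (or correct) this part of the lemma you must actually carry the computation through: derive $A_n^{\alpha,\rho}(e_3;x)$ and $A_n^{\alpha,\rho}(e_4;x)$ explicitly, which requires the third and fourth moments of the underlying $\alpha$-Baskakov operators $L_n^\alpha$ (not recorded in this paper, so they must be computed or cited from Aral--Erbay), assemble the binomial sum, and extract the $n^2$-coefficient — and you should be prepared for the answer to disagree with the statement as printed.
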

\vskip0.1in
Now, we recall the definitions and the results that are useful in the subsequent sections.

\begin{definition}{\bf{(First order modulus of continuity)}}\label{def1}
Let $f(x)$ be bounded on $[a,b],$ the modulus of continuity of $f(x)$ on $[a, b],$ is denoted by $\omega(f;\delta),$ and is defined for $\delta>0$ as:
\begin{eqnarray*}
\omega(f;\delta)=\sup_{\substack{(x,y)\in [a,b] \\ |x-y|\leq \delta}} |f(x)-f(y)|,
\end{eqnarray*}
with the following relation for $\lambda>0$
\begin{eqnarray}\label{a8}
\omega(f;\lambda \delta)\leq (1+\lambda)\,\omega(f;\delta).
\end{eqnarray}
\end{definition}
\vskip0.1in
\begin{definition}{\bf{(Lipschitz continuity)}}\label{def2}
A function $f$ satisfies the Lipschitz condition of order $\tau$ with the constant $L$ on $[a,b],$ i.e.
\begin{eqnarray*}
|f(x_1)-f(x_2)|\leq L\, |x_1-x_2|^\tau,\qquad 0< \tau\leq 1,\quad L>0, \quad \mbox{for}\quad x_1,x_2\in [a,b],
\end{eqnarray*}
iff $\omega(f;\delta)\leq L\, \delta^\tau.$
\end{definition}
\vskip0.1in
\begin{definition}{\bf{($K-$Functional)}}\label{def3}
Peetre \cite{kfu} introduced the second order $K-$functional for $f\in C_B[0,\infty)$:
\begin{eqnarray*}
K_2(f;\delta)=\displaystyle \inf\{\|f-g\|_{C_B[0,\infty)}+\delta \|g^{\prime\prime}\|_{C_B^2[0,\infty)}; g\in C_B^2[0,\infty)\},
\end{eqnarray*}
where $$C_B^2[0,\infty):=\{f\in C_B[0,\infty);  f^{\prime},f^{\prime\prime}\in C_B[0,\infty)\}.$$
\end{definition}
\vskip0.1in
In order to prove the uniform convergence of the positive linear operators to a continuous function, Korovkin has proved a simple theorem which is defined as:\\
\begin{thm}\label{thm}
Let $(L_n)_{n\geq 1}$ be a sequence of positive linear operators such that for every $f\in \{e_0,e_1,e_2\}$
$$\lim_{n\rightarrow \infty}L_n(f)=f \qquad \mbox{uniformly on}\qquad [a,b].$$
Then for a function $h\in C[a,b],$ we have
$$\lim_{n\rightarrow \infty}L_n(h)=h \qquad \mbox{uniformly on}\qquad [a,b].$$
\end{thm}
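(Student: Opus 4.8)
The plan is to exploit the positivity and linearity of the operators $L_n$ together with the classical quadratic majorant argument. First I would use that $h\in C[a,b]$ is, on the compact interval $[a,b]$, both bounded (say $|h|\le M$) and uniformly continuous. So, fixing $\varepsilon>0$, I would choose $\delta>0$ such that $|h(t)-h(x)|<\varepsilon$ whenever $t,x\in[a,b]$ with $|t-x|<\delta$.

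The crucial step is to establish the pointwise majorization
\[
|h(t)-h(x)|\le \varepsilon+\frac{2M}{\delta^2}(t-x)^2,\qquad t,x\in[a,b].
\]
For $|t-x|<\delta$ this is immediate from uniform continuity, while for $|t-x|\ge\delta$ one uses $|h(t)-h(x)|\le 2M\le \frac{2M}{\delta^2}(t-x)^2$. This inequality is the heart of the proof: it replaces the merely continuous difference $|h(t)-h(x)|$ by a polynomial of degree two in $t$, which is exactly what lets the hypotheses on $e_0,e_1,e_2$ enter.

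Next I would apply $L_n$ (at a fixed $x$) to this majorant. Using monotonicity and linearity, together with $L_n(|h(t)-h(x)|;x)\ge |L_n(h;x)-h(x)L_n(e_0;x)|$, I obtain
\[
\bigl|L_n(h;x)-h(x)L_n(e_0;x)\bigr|\le \varepsilon\,L_n(e_0;x)+\frac{2M}{\delta^2}\,L_n\bigl((t-x)^2;x\bigr).
\]
Expanding the central moment as $L_n((t-x)^2;x)=L_n(e_2;x)-2x\,L_n(e_1;x)+x^2L_n(e_0;x)$ and invoking the hypotheses $L_n(e_i)\to e_i$ uniformly for $i=0,1,2$, the quantity $L_n((t-x)^2;x)$ tends to $x^2-2x\cdot x+x^2=0$ uniformly on the bounded set $[a,b]$. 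Finally I would combine everything through
\[
\bigl|L_n(h;x)-h(x)\bigr|\le \bigl|L_n(h;x)-h(x)L_n(e_0;x)\bigr|+|h(x)|\,\bigl|L_n(e_0;x)-1\bigr|,
\]
where the first term is controlled by the previous estimate and the second by $|h(x)|\le M$ together with $L_n(e_0)\to 1$. Taking the supremum over $x\in[a,b]$ and then $n$ large yields $\|L_n(h)-h\|\to 0$.

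The main obstacle is really the combination of the two preliminary facts: producing the quadratic majorant and then verifying that the second central moment $L_n((t-x)^2;x)$ vanishes uniformly, since this is the single point where all three hypotheses must be used simultaneously and where uniformity (rather than mere pointwise convergence) is essential. Once that is in hand, the remaining assembly is routine bookkeeping with positivity and linearity.
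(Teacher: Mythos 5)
Your proof is correct: the quadratic-majorant argument (bounding $|h(t)-h(x)|$ by $\varepsilon+\tfrac{2M}{\delta^2}(t-x)^2$, applying positivity, and expanding the second central moment in terms of $e_0,e_1,e_2$) is the classical and complete proof of this result. Note, however, that the paper itself offers no proof to compare against: it states this theorem as Korovkin's known result and only uses it as a tool, so your argument is supplying the standard textbook proof rather than diverging from anything in the text. The one step worth writing out explicitly if this were to be included is the inequality $|L_n(h;x)-h(x)L_n(e_0;x)|\le L_n(|h(t)-h(x)|;x)$, which follows from applying the positive operator $L_n$ to $\pm\bigl(h(t)-h(x)\bigr)\le |h(t)-h(x)|$; everything else is assembled exactly as you describe.
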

\vskip0.1in
To get the uniform convergence on an unbounded interval, we define the following conditions:\\
Let $f\in C_B^2[0,\infty)$ with
\begin{eqnarray}\label{norm}
\|f\|_{C_B^2[0,\infty)}&=& \|f\|_{C_B[0,\infty)}+\|f^{\prime}\|_{C_B[0,\infty)}+\|f^{\prime\prime}\|_{C_B[0,\infty)}\\
\mbox{where} \qquad\|f\|_{C_B[0,\infty)}&=&\displaystyle \sup_{x \in [0,\infty)} |f(x)|.\nonumber\\
\mbox{Define}\qquad D[0,\infty) &:=& \{f\in C_B[0,\infty):x\in[0,\infty) \lim_{x \rightarrow \infty}\frac{f(x)}{1+x^2}<\infty\}.\nonumber
\end{eqnarray}
Suppose $D_2[0,\infty):=\{f:|f(x)| \leq A_f(1+x^2)\}$
such that $A_f >0,$ a constant number depends only on $f.$
\section{Main results}
\subsection{Approximation in Korovkin and weighted Korovkin spaces}
\vskip0.2in
\begin{thm}
For every $f\in C[0,\infty) \cap D[0,\infty),$ the operators (\ref{ope}) are uniformly convergent to $f$ on each compact subset of $[0,A],$ where $A \in (0,\infty)$.
\end{thm}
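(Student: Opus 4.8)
The plan is to invoke the Korovkin theorem (Theorem \ref{thm}) on an arbitrary compact subinterval $[a,b]\subseteq[0,A]$. Since the $\alpha$-Baskakov Durrmeyer type operators $A_{n}^{\alpha,\rho}$ are positive and linear, it suffices to verify that $A_{n}^{\alpha,\rho}(e_i;x)\to e_i$ uniformly on $[a,b]$ for $i=0,1,2$, and the conclusion then follows for every continuous $h$ on that subinterval. Because a function in $C[0,\infty)\cap D[0,\infty)$ restricts to a function in $C[a,b]$, the hypothesis of the Korovkin theorem is met once the three test-function limits are established.

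First I would read off the moments from Lemma \ref{mome}. For $e_0$ the identity $A_{n}^{\alpha,\rho}(e_0;x)=1$ is exact, so convergence is trivial. For $e_1$, I would rewrite
\begin{eqnarray*}
A_{n}^{\alpha,\rho}(e_1;x)-x=\frac{\rho n x-2\rho(1-\alpha)x+1}{n\rho-1}-x=\frac{x\{1-2\rho(1-\alpha)\}+1}{n\rho-1},
\end{eqnarray*}
which is exactly the first central moment from Lemma \ref{cenm}. On the compact set $[a,b]$ the numerator is bounded uniformly in $x$, while the denominator $n\rho-1\to\infty$, so the difference tends to $0$ uniformly. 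For $e_2$ I would carry out the analogous subtraction using the explicit $A_{n}^{\alpha,\rho}(e_2;x)$: after collecting terms one obtains a ratio whose leading behaviour in the numerator is $O(n)$ against the denominator $(n\rho-1)(n\rho-2)=O(n^2)$, so $A_{n}^{\alpha,\rho}(e_2;x)-x^2\to 0$ uniformly on $[a,b]$ as well.

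The only real bookkeeping, rather than a genuine obstacle, is the $e_2$ estimate: one must confirm that the $x^2$-coefficient in $A_{n}^{\alpha,\rho}(e_2;x)$ approaches $1$ and the remaining $x$ and constant contributions vanish, all uniformly for $x\in[a,b]$. I would handle this by writing
\begin{eqnarray*}
\sup_{x\in[a,b]}\big|A_{n}^{\alpha,\rho}(e_2;x)-x^2\big|\leq C_1(a,b)\,\frac{1}{n}+C_2(a,b)\,\frac{1}{n^2}
\end{eqnarray*}
for suitable constants depending only on $a,b,\alpha,\rho$, where the $O(1/n)$ term comes from the $\{{\rho}^2n+3\rho n\}$-type pieces divided by the $n^2$ denominator. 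Compactness of $[a,b]$ guarantees these suprema are finite, so letting $n\to\infty$ forces each term to zero.

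Having verified the three Korovkin conditions uniformly on the arbitrary compact subinterval $[a,b]\subseteq[0,A]$, an application of Theorem \ref{thm} yields $A_{n}^{\alpha,\rho}(h;x)\to h(x)$ uniformly on $[a,b]$ for every $h\in C[a,b]$, and in particular for the restriction of any $f\in C[0,\infty)\cap D[0,\infty)$. Since $[a,b]$ was an arbitrary compact subset of $[0,A]$, this establishes the claimed uniform convergence on each compact subset, completing the proof.
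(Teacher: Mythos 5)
Your proposal is correct and follows essentially the same route as the paper: verify the Korovkin test functions $e_0,e_1,e_2$ via the moments in Lemma \ref{mome} and then invoke Theorem \ref{thm}. If anything, your version is slightly more careful than the paper's, since you make the uniformity on the compact set explicit by bounding the suprema of the error terms, whereas the paper only computes pointwise limits.
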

\begin{proof}
For the uniform convergence of the operators (\ref{ope}) to $f\in C[0,\infty) \cap D[0,\infty),$ we need to prove:\\
 $A_{n}^{\alpha,\rho}(e_i;x)\rightarrow e_i,$  for $i=0,1,2.$\\
From Lemma \ref{mome}, it is obvious that $A_{n}^{\alpha,\rho}(e_0;x)\rightarrow e_0,$ as $n\rightarrow \infty.$\\
For $i=1,$ we can check from below:
\begin{eqnarray*}
\displaystyle\lim_{n\rightarrow \infty} A_{n}^{\alpha,\rho}(e_1;x)&=&\lim_{n\rightarrow \infty}\frac{n\rho x-2\rho(1-\alpha)x+1}{n\rho-1}\\
&=&\lim_{n\rightarrow \infty}\left(x+\frac{x-2\rho(1-\alpha)x+1}{n\rho-1}\right)=x.\\
\end{eqnarray*}
Now, for $i=2$
\begin{eqnarray*}
\lim_{n\rightarrow \infty}A_{n}^{\alpha,\rho}(e_2;x)&=&\lim_{n\rightarrow \infty}\frac{1}{(n\rho-1)(n\rho-2)}\bigg[x^2\big\{{\rho}^2n^2+(4\alpha-3){\rho}^2n\big\}\\
&&+x\big\{{\rho}^2(n+4\alpha-4)+3\rho n-6\rho(1-\alpha)\big\}+2\bigg]\\
&=& x^2+ \lim_{n\rightarrow \infty}\frac{1}{(n\rho-1)(n\rho-2)}\bigg[x^2\big\{3\rho n-2+(4\alpha-3){\rho}^2n\big\}\\
&&+x\big\{{\rho}^2(n+4\alpha-4)+3\rho n-6\rho(1-\alpha)\big\}+2\bigg]=x^2.\\
\end{eqnarray*}
Hence, by using Theorem \ref{thm}, we get the uniform convergence of our operators.
\end{proof}

For the approximation in weighted Korovkin spaces, we define the spaces as:
$$C_2[0,\infty)=D_2[0,\infty)\cap C[0,\infty)$$
and
$C_2^*[0,\infty)=\left\{f \in C_2[0,\infty):\displaystyle \lim_{x \rightarrow \infty}\frac{|f(x)|}{1+x^2}<\infty \right\}$ with $\|f\|_2=\displaystyle \sup_{x\in[0,\infty)}\frac{|f(x)|}{1+x^2}.$
\begin{thm}
For $f\in C_2^*[0,\infty),$ the operators $A_{n}^{\alpha,\rho}(f;x)$ satisfy:
$$\displaystyle\lim_{n\rightarrow \infty} \|A_{n}^{\alpha,\rho}(f;.)-f\|_2=0.$$
\end{thm}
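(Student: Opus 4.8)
The plan is to invoke the weighted Korovkin theorem of Gadzhiev, which is the natural analogue of Theorem \ref{thm} on the unbounded interval with weight $1+x^2$. That theorem asserts that for a sequence of positive linear operators mapping $C_2^*[0,\infty)$ into itself, the weighted convergence $\|A_{n}^{\alpha,\rho}(f;\cdot)-f\|_2\to 0$ holds for every $f\in C_2^*[0,\infty)$ as soon as it holds for the three test functions $e_0,e_1,e_2$. Hence the whole argument reduces to checking
$$\lim_{n\rightarrow\infty}\|A_{n}^{\alpha,\rho}(e_\nu;\cdot)-e_\nu\|_2=0,\qquad \nu=0,1,2,$$
and every ingredient needed is already contained in Lemma \ref{mome} and Lemma \ref{cenm}.

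For $\nu=0$ the matter is immediate: Lemma \ref{mome} gives $A_{n}^{\alpha,\rho}(e_0;x)=1=e_0$, so that $\|A_{n}^{\alpha,\rho}(e_0;\cdot)-e_0\|_2=0$ for every $n$. For $\nu=1$ I would subtract and use the first central moment from Lemma \ref{cenm}, writing $A_{n}^{\alpha,\rho}(e_1;x)-x=\frac{1}{n\rho-1}\bigl[x\{1-2\rho(1-\alpha)\}+1\bigr]$, since linearity and the previous line give $A_{n}^{\alpha,\rho}(e_1;x)-x=A_{n}^{\alpha,\rho}(t-x;x)$. Dividing by $1+x^2$ and taking the supremum over $x\in[0,\infty)$, the numerator is linear in $x$ while the denominator is quadratic, so the weighted supremum is bounded by $\frac{C}{n\rho-1}$ for a constant $C$ depending only on $\alpha,\rho$; letting $n\rightarrow\infty$ gives the claim.

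The case $\nu=2$ is the only step requiring genuine computation, and I expect it to be the main obstacle. Here I would form the difference $A_{n}^{\alpha,\rho}(e_2;x)-x^2$ from Lemma \ref{mome}; it is a polynomial of degree two in $x$ whose coefficients I would collect explicitly. The key observation is that the leading coefficient $\frac{\rho^2 n^2+(4\alpha-3)\rho^2 n}{(n\rho-1)(n\rho-2)}-1$ tends to $0$ as $n\rightarrow\infty$, and likewise the coefficients of $x$ and of the constant term are $O(1/n)$. Writing the difference as $a_n x^2+b_n x+c_n$ with $a_n,b_n,c_n\to 0$, I would then estimate
$$\|A_{n}^{\alpha,\rho}(e_2;\cdot)-e_2\|_2=\sup_{x\geq 0}\frac{|a_n x^2+b_n x+c_n|}{1+x^2}\leq |a_n|+|b_n|+|c_n|,$$
using the elementary bounds $\frac{x^2}{1+x^2}\leq 1$, $\frac{x}{1+x^2}\leq 1$ and $\frac{1}{1+x^2}\leq 1$. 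Since each coefficient vanishes in the limit, the right-hand side tends to $0$, which verifies the third Korovkin condition and hence, by the weighted Korovkin theorem, completes the proof.
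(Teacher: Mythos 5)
Your proposal is correct and follows essentially the same route as the paper: invoke the weighted Korovkin theorem and verify the three test functions $e_0,e_1,e_2$ using Lemma \ref{mome}, bounding the weighted norms via $\frac{x^2}{1+x^2},\frac{x}{1+x^2},\frac{1}{1+x^2}\leq 1$. In fact your treatment of the $\nu=2$ case is slightly more careful than the paper's displayed estimate, which omits the ``$-1$'' in the coefficient of $x^2$ (i.e.\ it should read $\bigl|\frac{\rho^2n^2+(4\alpha-3)\rho^2n}{(n\rho-1)(n\rho-2)}-1\bigr|$); your version with $a_n=\frac{\rho^2n^2+(4\alpha-3)\rho^2n}{(n\rho-1)(n\rho-2)}-1\to 0$ is the intended, correct computation.
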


\begin{proof}
By weighted Korovkin thm, it is sufficient to prove:
$$\displaystyle\lim_{n\rightarrow \infty}\|A_{n}^{\alpha,\rho}(e_i;.)-e_i\|_2=0,\quad \mbox{  for} \quad i=0,1,2.$$
For $i=0$, using Lemma \ref{mome}, we have $A_{n}^{\alpha,\rho}(e_0;x)=1.$\\
Therefore, we have $\|A_{n}^{\alpha,\rho}(e_0;.)-e_0\|_2=0.$ \\
For $i=1,$
\begin{eqnarray*}
\|A_{n}^{\alpha,\rho}(e_1;x)-e_1\|_2&=&\displaystyle \sup_{x\in[0,\infty)}\frac{1}{1+x^2}\left|\frac{n\rho x-2\rho(1-\alpha)x+1}{n\rho-1}-x\right|\\
&\leq & \displaystyle \sup_{x\in[0,\infty)}\frac{x}{1+x^2}\left|\frac{1-2\rho(1-\alpha)}{n\rho-1}\right|+\displaystyle \sup_{x\in[0,\infty)}\frac{1}{1+x^2}\left|\frac{1}{n\rho-1}\right|.
\end{eqnarray*}
Thus, we get:
\begin{eqnarray*}
\|A_{n}^{\alpha,\rho}(e_1;x)-e_1\|_2=0 \,\mbox{as}\,\, n\rightarrow \infty.
\end{eqnarray*}
Similarly, we can prove for $i=2$:
\begin{eqnarray*}
\|A_{n}^{\alpha,\rho}(e_2;x)-e_2\|_2&=&\displaystyle \sup_{x\in [0,\infty)}\frac{|A_{n}^{\alpha,\rho}(e_2;x)-x^2|}{1+x^2}\\
&\leq & \displaystyle \sup_{x\in[0,\infty)}\frac{x^2}{1+x^2}\left|\frac{{\rho}^2n^2+(4\alpha-3){\rho}^2n}{(n\rho-1)(n\rho-2)}\right|\\
&&+\displaystyle \sup_{x\in[0,\infty)}\frac{x}{1+x^2}\left|\frac{{\rho}^2(n+4\alpha-4)+3\rho n-6\rho(1-\alpha)}{(n\rho-1)(n\rho-2)}\right|\\
&&+\displaystyle \sup_{x\in[0,\infty)}\frac{1}{1+x^2}\left|\frac{2}{(n\rho-1)(n\rho-2)}\right|,
\end{eqnarray*}
which gives us\qquad \qquad $\|A_{n}^{\alpha,\rho}(e_2;x)-e_2\|_2=0\,\,\mbox{as}\,\, n\rightarrow \infty.$\\
This completes the proof.
\end{proof}

\subsection{Pointwise Approximation Properties by $A_{n}^{\alpha,\rho}(f;x)$}
\begin{thm}
Let $f \in C_B[0,\infty)$ and $x \in [0,b]$ such that $b>0,$ then we have:
$$| A_{n}^{\alpha,\rho}(f;x)-f(x)|\leq 2\omega\left(f;\sqrt{A_{n}^{\alpha,\rho}((t-x)^2;x)}\,\right).$$
\end{thm}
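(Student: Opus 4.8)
The plan is to exploit the two structural facts already available: that $A_n^{\alpha,\rho}$ is a positive linear operator, and that $A_n^{\alpha,\rho}(e_0;x)=1$ from Lemma \ref{mome}. First I would use linearity together with $A_n^{\alpha,\rho}(e_0;x)=1$ to rewrite the error as
\begin{eqnarray*}
A_n^{\alpha,\rho}(f;x)-f(x) = A_n^{\alpha,\rho}\big(f(t)-f(x);x\big),
\end{eqnarray*}
since the constant $f(x)$ factors out of the operator and is reproduced by $e_0$. Taking absolute values and invoking positivity of the operator gives $|A_n^{\alpha,\rho}(f;x)-f(x)|\le A_n^{\alpha,\rho}\big(|f(t)-f(x)|;x\big)$, which is the inequality that converts the problem into one about the modulus of continuity.

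Next I would bound the integrand pointwise using Definition \ref{def1} and its scaling relation (\ref{a8}). Writing $|f(t)-f(x)|\le\omega(f;|t-x|)$ and then applying (\ref{a8}) with $\lambda=|t-x|/\delta$ yields
\begin{eqnarray*}
|f(t)-f(x)|\le\Big(1+\frac{|t-x|}{\delta}\Big)\,\omega(f;\delta)
\end{eqnarray*}
for any $\delta>0$. Feeding this into the operator and using linearity together with $A_n^{\alpha,\rho}(e_0;x)=1$ again produces
\begin{eqnarray*}
|A_n^{\alpha,\rho}(f;x)-f(x)|\le\omega(f;\delta)\Big(1+\frac{1}{\delta}A_n^{\alpha,\rho}(|t-x|;x)\Big).
\end{eqnarray*}

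The one genuinely technical step is controlling $A_n^{\alpha,\rho}(|t-x|;x)$, and for this I would apply the Cauchy--Schwarz inequality for positive linear operators, namely $A_n^{\alpha,\rho}(|t-x|;x)\le\sqrt{A_n^{\alpha,\rho}((t-x)^2;x)}\,\sqrt{A_n^{\alpha,\rho}(e_0;x)}$. Since $A_n^{\alpha,\rho}(e_0;x)=1$, the second factor is $1$, leaving the bound $\sqrt{A_n^{\alpha,\rho}((t-x)^2;x)}$, whose explicit value is supplied by Lemma \ref{cenm}. Finally, I would make the optimizing choice $\delta=\sqrt{A_n^{\alpha,\rho}((t-x)^2;x)}$, so that the bracketed factor becomes $1+1=2$, giving exactly the claimed estimate. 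I do not expect a serious obstacle here; the only point requiring care is justifying the Cauchy--Schwarz step for the integral-type operator and confirming that the central second moment from Lemma \ref{cenm} is finite and nonnegative on $[0,b]$ so that the square root and the choice of $\delta$ are legitimate.
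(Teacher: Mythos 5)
Your proposal is correct and follows essentially the same route as the paper: positivity plus $A_n^{\alpha,\rho}(e_0;x)=1$, the scaling property (\ref{a8}) with $\lambda=|t-x|/\delta$, a Cauchy--Schwarz bound giving $A_n^{\alpha,\rho}(|t-x|;x)\le\sqrt{A_n^{\alpha,\rho}((t-x)^2;x)}$, and the choice $\delta=\sqrt{A_n^{\alpha,\rho}((t-x)^2;x)}$. The only cosmetic difference is that you invoke Cauchy--Schwarz abstractly at the operator level, whereas the paper carries it out concretely in two stages (first on the integral against $\mu_{n,k}^{\rho}$, using $\int_0^\infty\mu_{n,k}^{\rho}(t)\,dt=1$, then on the sum against $p_{n,k}^{\alpha}$, using $\sum_k p_{n,k}^{\alpha}(x)=1$), which amounts to the same thing.
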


\begin{proof}
From Definition \ref{def1}, we have:
\begin{eqnarray*}
|A_{n}^{\alpha,\rho}(f;x)-f(x)| &=& \left|\left(\sum_{k=0}^{\infty}p_{n,k}^{\alpha}(x)\int_{0}^{\infty}\mu_{n,k}^{\rho}(t) \,f(t) \,dt\right)-f(x)\right|\\
& \leq & \sum_{k=0}^{\infty}p_{n,k}^{\alpha}(x)\int_{0}^{\infty} \mu_{n,k}^{\rho}(t) \,|f(t)-f(x)| \,dt\\
& \leq & \sum_{k=0}^{\infty}p_{n,k}^{\alpha}(x)\int_0^{\infty}\mu_{n,k}^{\rho}(t)  \,\omega(f;|t-x|) \,dt.
\end{eqnarray*}
By using the property of $\omega(f;\delta \lambda)\leq (1+\lambda)\,\omega(f;\delta)$ with $\lambda=\dfrac{1}{\delta}|t-x|$, we get:
\begin{eqnarray}
|A_{n}^{\alpha,\rho}(f;x)-f(x)|& \leq & \sum_{k=0}^{\infty}p_{n,k}^{\alpha}(x)\int_0^{\infty} \mu_{n,k}^{\rho}(t)\left(1+\frac{1}{\delta}\,|t-x|\right)\omega(f;\delta)\,dt\nonumber\\
&=& \sum_{k=0}^{\infty}p_{n,k}^{\alpha}(x)\int_0^{\infty}\mu_{n,k}^{\rho}(t)\,\omega(f;\delta)\,dt\nonumber\\
&&+\frac{1}{\delta}\sum_{k=0}^{\infty}p_{n,k}^{\alpha}(x)\int_0^{\infty}\mu_{n,k}^{\rho}(t)\,|t-x|\,\omega(f;\delta)\,dt\nonumber\\
&=& \omega(f;\delta)+\frac{1}{\delta}\sum_{k=0}^{\infty}p_{n,k}^{\alpha}(x)\int_0^{\infty}\mu_{n,k}^{\rho}(t)\,|t-x|\,\omega(f;\delta)\,dt\nonumber\\
&=& \left(1+\frac{1}{\delta}\sum_{k=0}^{\infty}p_{n,k}^{\alpha}(x)\int_0^{\infty}\mu_{n,k}^{\rho}(t)\,|t-x|\,dt\right)\omega(f;\delta).
\end{eqnarray}
Now, by using Holder's inequality, we have:
\begin{eqnarray*}
\sum_{k=0}^{\infty}p_{n,k}^{\alpha}(x)\int_0^{\infty}\mu_{n,k}^{\rho}(t)\,|t-x|\,dt & \leq & \sum_{k=0}^{\infty}p_{n,k}^{\alpha}(x)\left[\int_0^{\infty}\mu_{n,k}^{\rho}(t)\,dt\right]^{\frac{1}{2}}.\left[\int_0^{\infty}\mu_{n,k}^{\rho}(t)(t-x)^2\,dt\right]^{\frac{1}{2}}\\
&=& \sum_{k=0}^{\infty}p_{n,k}^{\alpha}(x)\left[\int_0^{\infty}\mu_{n,k}^{\rho}(t)(t-x)^2dt\right]^{\frac{1}{2}} \qquad \qquad \qquad \left[\because \int_0^{\infty}\mu_{n,k}^{\rho}(t)\,dt=1\right]\\
& \leq &\left[\sum_{k=0}^{\infty}p_{n,k}^{\alpha}(x)\right]^{\frac{1}{2}}.\left[\sum_{k=0}^{\infty}p_{n,k}^{\alpha}(x)\int_0^{\infty}\mu_{n,k}^{\rho}(t)(t-x)^2dt\right]^{\frac{1}{2}} \left[\because \sum_{k=0}^{\infty}p_{n,k}^{\alpha}(x)=1\right]\\
&=&\left[\sum_{k=0}^{\infty}p_{n,k}^{\alpha}(x)\int_0^{\infty}\mu_{n,k}^{\rho}(t)(t-x)^2dt\right]^{\frac{1}{2}}= [A_{n}^{\alpha,\rho}((t-x)^2;x)]^{\frac{1}{2}}.
\end{eqnarray*}
Thus (4.1) reduces to:
\begin{eqnarray*}
| A_{n}^{\alpha,\rho}(f;x)-f(x)|& \leq &\left(1+\frac{1}{\delta} \sqrt{A_{n}^{\alpha,\rho}((t-x)^2;x)}\right)\omega(f;\delta).
\end{eqnarray*}
On choosing $\delta=\sqrt{A_{n}^{\alpha,\rho}((t-x)^2;x)},$ we obtain the desired result.
\end{proof}
\vskip0.1in
\begin{thm}
For every $f\in C_B^2[0,\infty)$
\begin{eqnarray*}
\displaystyle\lim_{n \rightarrow \infty}(n\rho-1)\left[ A_{n}^{\alpha,\rho}(f;x)-f(x)\right]&=&[1-2\rho(1-\alpha)x+1]f^{\prime}(x)+(\rho+1)x(x+1)\frac{f^{\prime\prime}(x)}{2}
\end{eqnarray*}
uniformly for $0\leq x\leq b,$ where $b>0.$
\end{thm}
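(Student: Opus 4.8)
The plan is to use a local Taylor expansion of $f$ about $x$ together with the central-moment formulas of Lemma \ref{cenm}; this is the standard route to a Voronovskaja-type identity. Since $f\in C_B^2[0,\infty)$, for fixed $x$ we may write
\begin{eqnarray*}
f(t)=f(x)+f'(x)(t-x)+\tfrac12 f''(x)(t-x)^2+\psi(t,x)(t-x)^2,
\end{eqnarray*}
where the remainder function $\psi(t,x)$ is bounded on $[0,\infty)$ and satisfies $\psi(t,x)\to 0$ as $t\to x$. The idea is to apply the positive linear operator $A_n^{\alpha,\rho}(\cdot;x)$ term by term and to exploit $A_n^{\alpha,\rho}(e_0;x)=1$ from Lemma \ref{mome}.

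Applying the operator and using linearity gives
\begin{eqnarray*}
A_n^{\alpha,\rho}(f;x)-f(x)=f'(x)\,A_n^{\alpha,\rho}(t-x;x)+\tfrac12 f''(x)\,A_n^{\alpha,\rho}((t-x)^2;x)+A_n^{\alpha,\rho}\big(\psi(t,x)(t-x)^2;x\big).
\end{eqnarray*}
Multiplying by $(n\rho-1)$ and invoking Lemma \ref{cenm}, the first term converges to $\big[(1-2\rho(1-\alpha))x+1\big]f'(x)$, since $(n\rho-1)A_n^{\alpha,\rho}(t-x;x)$ equals this expression identically. For the second term, a direct computation shows $(n\rho-1)A_n^{\alpha,\rho}((t-x)^2;x)\to(\rho+1)x(x+1)$: the dominant $n\rho(\rho+1)$ contributions in the numerator, divided by $(n\rho-2)$, yield the factor $(\rho+1)$ on both the $x^2$ and the $x$ terms, while all lower-order contributions vanish in the limit. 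This reproduces the coefficient $(\rho+1)x(x+1)f''(x)/2$.

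The crux is to show that the remainder satisfies $(n\rho-1)A_n^{\alpha,\rho}\big(\psi(t,x)(t-x)^2;x\big)\to 0$ uniformly on $[0,b]$. I would estimate it by the Cauchy--Schwarz inequality,
\begin{eqnarray*}
\big|A_n^{\alpha,\rho}\big(\psi(t,x)(t-x)^2;x\big)\big|\le \sqrt{A_n^{\alpha,\rho}(\psi^2(t,x);x)}\,\sqrt{A_n^{\alpha,\rho}((t-x)^4;x)}.
\end{eqnarray*}
By Lemma \ref{cenm}, $A_n^{\alpha,\rho}((t-x)^4;x)=O(1/n^2)$, so $(n\rho-1)\sqrt{A_n^{\alpha,\rho}((t-x)^4;x)}$ stays bounded on $[0,b]$. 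It then remains to prove $A_n^{\alpha,\rho}(\psi^2(t,x);x)\to 0$: given $\varepsilon>0$, split the integration according to $|t-x|<\delta$ and $|t-x|\ge\delta$, using $|\psi|<\varepsilon$ on the first region and the boundedness of $\psi$ together with the Chebyshev-type bound $A_n^{\alpha,\rho}(\chi_{\{|t-x|\ge\delta\}};x)\le \delta^{-2}A_n^{\alpha,\rho}((t-x)^2;x)\to 0$ on the second. Combining these estimates forces the remainder to vanish.

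The main obstacle is the uniformity in $x\in[0,b]$ of this remainder estimate: one must ensure that the Taylor remainder bound, the decay of $A_n^{\alpha,\rho}((t-x)^2;x)$, and the $O(1/n^2)$ control of the fourth central moment all hold uniformly over the compact interval rather than merely pointwise. Because $[0,b]$ is compact and $f''$ is uniformly continuous there, the splitting argument can be made uniform, but this requires care in tracking the constants in the moment estimates as functions of $x$.
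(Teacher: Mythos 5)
Your proposal is correct and follows essentially the same route as the paper: Taylor expansion with a Peano-type remainder, evaluation of the first and second central moments via Lemma \ref{cenm}, Cauchy--Schwarz on the remainder term, and the $O(1/n^2)$ bound on the fourth central moment. In fact your treatment is slightly more complete than the paper's, which merely asserts that the remainder term vanishes ``by Lemma \ref{cenm}'', whereas you supply the missing step $A_n^{\alpha,\rho}(\psi^2(t,x);x)\to 0$ via the $|t-x|<\delta$ / $|t-x|\ge\delta$ splitting and the Chebyshev-type bound, together with the uniformity discussion on $[0,b]$.
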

\begin{proof}
The Taylor's polynomial for $f \in C_B^2[0,\infty)$, i.e.
\begin{eqnarray}\label{j1}
f(t)=f(x)+(t-x)f^{\prime}(x)+\frac{(t-x)^2}{2}f^{\prime\prime}(x)+\epsilon(x,a)(t-x)^2\,\quad \mbox{where} \quad a\in [0,\infty)
\end{eqnarray}
having the conditions: $$\epsilon(x,a)\in C_B[0,\infty) \,\mbox{and}\,\displaystyle \lim_{x\rightarrow a}\epsilon(x,a)=0.$$
Now, applying $A_{n}^{\alpha,\rho}(.;.)$ and then multiplying with $(n\rho-1)$ in both sides of (\ref{j1}):
\begin{eqnarray}
(n\rho-1)(A_{n}^{\alpha,\rho}(f;x)-f(x))&=&(n\rho-1)\bigg[A_{n}^{\alpha,\rho}(t-x;x)f^{\prime}(x)+\frac{1}{2}A_{n}^{\alpha,\rho}((t-x)^2;x)f^{\prime\prime}(x)\\
&&+A_{n}^{\alpha,\rho}(\epsilon(x,a)(t-x)^2;x)\bigg]{\nonumber}\\
&=&\bigg[x(1-2\rho(1-\alpha))+1\bigg]f^{\prime}(x)+ \left[(\rho+1)x(x+1)+\frac{2(\rho+1)x(x+1)}{n\rho-2}\right.{\nonumber}\\
&&\left.+\frac{-8\rho(1-\alpha)x^2+2x^2-4{\rho}^2(1-\alpha)x-6\rho(1-\alpha)x+4x+2}{n\rho-2}\right]\frac{f^{\prime\prime}(x)}{2}{\nonumber}\\
&&+(n\rho-1)A_{n}^{\alpha,\rho}(\epsilon(x,a)(t-x)^2;x).\label{eq}
\end{eqnarray}
By Cauchy-Schwartz inequality on the last term of above equation, we get:
\begin{eqnarray*}
(n\rho -1)A_{n}^{\alpha,\rho}(\epsilon(x,a)(t-x)^2;x) &\leq &(n\rho-1) \sqrt{A_{n}^{\alpha,\rho}({\epsilon}^2(x,a);x)}.\sqrt{A_{n}^{\alpha,\rho}((t-x)^4;x)}.
\end{eqnarray*}
Hence, by using Lemma \ref{cenm}, we have:\\
$$\lim_{n\rightarrow \infty}(n\rho-1)A_{n}^{\alpha,\rho}(\epsilon(x,a)(t-x)^2;x)=0.$$
Thus, by taking limit as $n\rightarrow \infty$ on equation (\ref{eq}), we obtain the required result.
\end{proof}
\vskip0.1in
\begin{thm}
Let $f \in Lip_{\mathcal{M}}^{\gamma}$ with $\mathcal{M}>0$ and $0<\gamma \leq 1.$Then the operators $A_{n}^{\alpha,\rho}(.;.)$ satisfy
\begin{eqnarray*}
|A_{n}^{\alpha,\rho}(f;x)-f(x)| \leq \mathcal{M}\left(A_{n}^{\alpha,\rho}((t-x)^2;x)\right)^{\frac{\gamma}{2}}.
\end{eqnarray*}
\end{thm}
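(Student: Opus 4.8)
The plan is to exploit positivity together with the normalization $A_{n}^{\alpha,\rho}(e_0;x)=1$ from Lemma \ref{mome}, exactly as in the modulus-of-continuity theorem above, and then to trade the Lipschitz bound for the second central moment by a twofold application of H\"older's inequality. First I would pull $f(x)$ inside the double summation, which is legitimate precisely because $\sum_{k=0}^{\infty}p_{n,k}^{\alpha}(x)=1$ and $\int_0^{\infty}\mu_{n,k}^{\rho}(t)\,dt=1$, and then take absolute values to get
$$|A_{n}^{\alpha,\rho}(f;x)-f(x)| \leq \sum_{k=0}^{\infty}p_{n,k}^{\alpha}(x)\int_0^{\infty}\mu_{n,k}^{\rho}(t)\,|f(t)-f(x)|\,dt.$$
Invoking the hypothesis $f\in Lip_{\mathcal{M}}^{\gamma}$ (understood on $[0,\infty)$, so that the estimate holds throughout the range of integration) replaces the integrand by $\mathcal{M}|t-x|^{\gamma}$, so the right-hand side is bounded by $\mathcal{M}\sum_{k}p_{n,k}^{\alpha}(x)\int_0^{\infty}\mu_{n,k}^{\rho}(t)|t-x|^{\gamma}\,dt$.

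The crux is to promote the fractional power $|t-x|^{\gamma}$ to the square $(t-x)^2$ at the cost of the exponent $\gamma/2$. To this end I would apply H\"older's inequality to the inner integral with the conjugate exponents $p=2/\gamma$ and $q=2/(2-\gamma)$, using the factorization $\mu_{n,k}^{\rho}(t)=[\mu_{n,k}^{\rho}(t)]^{\gamma/2}[\mu_{n,k}^{\rho}(t)]^{(2-\gamma)/2}$, to obtain
$$\int_0^{\infty}\mu_{n,k}^{\rho}(t)|t-x|^{\gamma}\,dt \leq \left[\int_0^{\infty}\mu_{n,k}^{\rho}(t)(t-x)^2\,dt\right]^{\gamma/2}\left[\int_0^{\infty}\mu_{n,k}^{\rho}(t)\,dt\right]^{(2-\gamma)/2},$$
where the second bracket equals $1$.

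I would then apply H\"older a second time, now to the sum over $k$ with the same pair of exponents and the splitting $p_{n,k}^{\alpha}(x)=[p_{n,k}^{\alpha}(x)]^{\gamma/2}[p_{n,k}^{\alpha}(x)]^{(2-\gamma)/2}$, which yields
$$\sum_{k=0}^{\infty}p_{n,k}^{\alpha}(x)\left[\int_0^{\infty}\mu_{n,k}^{\rho}(t)(t-x)^2\,dt\right]^{\gamma/2}\leq \left[\sum_{k=0}^{\infty}p_{n,k}^{\alpha}(x)\int_0^{\infty}\mu_{n,k}^{\rho}(t)(t-x)^2\,dt\right]^{\gamma/2}\left[\sum_{k=0}^{\infty}p_{n,k}^{\alpha}(x)\right]^{(2-\gamma)/2},$$
where the last bracket is again $1$ and the first bracket is exactly $A_{n}^{\alpha,\rho}((t-x)^2;x)$. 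Chaining the three estimates gives the claimed bound. The only care needed is that both applications of H\"older rest on the underlying measures having total mass one, which is precisely $A_{n}^{\alpha,\rho}(e_0;x)=1$; and that $0<\gamma\leq 1$ makes $p=2/\gamma\geq 2$ a legitimate exponent, so no genuine obstacle arises. In the boundary case $\gamma=1$ the two H\"older steps collapse to the single Cauchy--Schwarz estimate already used in the preceding theorems.
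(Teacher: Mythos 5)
Your proposal is correct and follows essentially the same route as the paper's own proof: linearity plus the Lipschitz bound, then H\"older's inequality applied twice (first to the integral in $t$, then to the sum over $k$) with the conjugate exponents $2/\gamma$ and $2/(2-\gamma)$, using the normalizations $\int_0^\infty \mu_{n,k}^{\rho}(t)\,dt=1$ and $\sum_k p_{n,k}^{\alpha}(x)=1$ to absorb the second factor each time. The only difference is cosmetic labeling of which exponent is called $p$ and which is called $q$.
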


\begin{proof}
By using the linearity of the operators $A_{n}^{\alpha,\rho}(f;x)$, we obtain:
\begin{eqnarray*}
|A_{n}^{\alpha,\rho}(f;x)-f(x)|& \leq & A_{n}^{\alpha,\rho}(|f(t)-f(x)|;x).
\end{eqnarray*}
From Definition \ref{def2}, we have
\begin{eqnarray*}
|A_{n}^{\alpha,\rho}(f;x)-f(x)|& \leq & A_{n}^{\alpha,\rho}(\mathcal{M}|t-x|^{\gamma};x)=\mathcal{M} \sum_{k=0}^{\infty}p_{n,k}^{\alpha}(x)\int_0^{\infty}\mu_{n,k}^{\rho}(t)|t-x|^{\gamma}dt\\
&\leq & \mathcal{M}\sum_{k=0}^{\infty}p_{n,k}^{\alpha}(x)\left[\int_{0}^{\infty}\mu_{n,k}^{\rho}(t)dt\right]^{\frac{2-\gamma}{2}}
\left[\int_{0}^{\infty}\mu_{n,k}^{\rho}(t)(t-x)^2dt\right]^{\frac{\gamma}{2}} \\
&&\qquad\qquad~~~~~~~~~~~~~~~~~~~~~~~~~~~~~~~~~~ \qquad \,\,[\mbox{By Holder's inequality with} \,\,p=\frac{2}{2-\gamma}\,\,\mbox{and}\,\,q=\frac{2}{\gamma} ]\\
&= & \mathcal{M}\sum_{k=0}^{\infty}p_{n,k}^{\alpha}(x)\left[\int_{0}^{\infty}\mu_{n,k}^{\rho}(t)(t-x)^2dt\right]^{\frac{\gamma}{2}}\\
&\leq & \mathcal{M}\left[\sum_{k=0}^{\infty}p_{n,k}^{\alpha}(x)\right]^{\frac{2-\gamma}{2}}
\left[\sum_{k=0}^{\infty}p_{n,k}^{\alpha}(x)\int_{0}^{\infty}\mu_{n,k}^{\rho}(t)(t-x)^2dt\right]^{\frac{\gamma}{2}}\\
&=& \mathcal{M}\left(A_{n}^{\alpha,\rho}((t-x)^2;x)\right)^{\frac{\gamma}{2}}.
\end{eqnarray*}
Hence, the proof is completed.
\end{proof}

\begin{thm}{\label{thm7}}
Let $f \in C_B^2[0,\infty)$ and $n\rho >2$, then:
\begin{eqnarray*}
|A_{n}^{\alpha,\rho}(f;x)-f(x)|\leq  \left(\Gamma_{n}(x)+\frac{\Delta_n(x)}{2}\right)\|f\|_{C_B^2[0,\infty)}
\end{eqnarray*}
where $\quad \Gamma_{n}(x)=A_{n}^{\alpha,\rho}((t-x);x)$ and $\quad \Delta_n(x)=A_{n}^{\alpha,\rho}((t-x)^2;x).$

\end{thm}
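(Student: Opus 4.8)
The plan is to combine a second-order Taylor expansion of $f$ about the point $x$ with the linearity and positivity of $A_{n}^{\alpha,\rho}$, in the same spirit as the Voronovskaja computation already carried out, but retaining only crude supremum bounds on the derivatives instead of passing to a limit. First I would write, for $f\in C_B^2[0,\infty)$, the Taylor formula with Lagrange remainder
$$f(t)=f(x)+(t-x)f^{\prime}(x)+\frac{(t-x)^2}{2}f^{\prime\prime}(\xi),$$
where $\xi$ lies between $t$ and $x$. Bounding the remainder immediately gives the pointwise estimate
$$\left|f(t)-f(x)-(t-x)f^{\prime}(x)\right|\le \frac{(t-x)^2}{2}\,\|f^{\prime\prime}\|_{C_B[0,\infty)}.$$

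Next I would apply $A_{n}^{\alpha,\rho}(\cdot;x)$, using its linearity together with $A_{n}^{\alpha,\rho}(e_0;x)=1$ from Lemma \ref{mome}, and then invoke positivity, which lets $A_{n}^{\alpha,\rho}$ transport the pointwise bound to the image. Writing $g(t)=f(t)-f(x)-(t-x)f^{\prime}(x)$, positivity gives $|A_{n}^{\alpha,\rho}(g;x)|\le A_{n}^{\alpha,\rho}(|g|;x)$, so that
$$\left|A_{n}^{\alpha,\rho}(f;x)-f(x)-f^{\prime}(x)\,A_{n}^{\alpha,\rho}((t-x);x)\right|\le \frac{\|f^{\prime\prime}\|_{C_B[0,\infty)}}{2}\,A_{n}^{\alpha,\rho}((t-x)^2;x).$$
A single triangle inequality then peels off the first-order term, after which I would absorb the derivatives into the full norm via the elementary bounds $|f^{\prime}(x)|\le \|f^{\prime}\|_{C_B[0,\infty)}\le \|f\|_{C_B^2[0,\infty)}$ and $\|f^{\prime\prime}\|_{C_B[0,\infty)}\le \|f\|_{C_B^2[0,\infty)}$. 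This collapses both coefficients onto the single factor $\|f\|_{C_B^2[0,\infty)}$ and, upon identifying $\Gamma_n(x)=A_{n}^{\alpha,\rho}((t-x);x)$ and $\Delta_n(x)=A_{n}^{\alpha,\rho}((t-x)^2;x)$, yields exactly $\big(\Gamma_n(x)+\tfrac{1}{2}\Delta_n(x)\big)\|f\|_{C_B^2[0,\infty)}$.

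The one point requiring genuine care, rather than routine bookkeeping, is the first-order term. The bound produced by the argument above naturally carries $|A_{n}^{\alpha,\rho}((t-x);x)|=|\Gamma_n(x)|$, whereas the statement records the unsigned $\Gamma_n(x)$. From the explicit first central moment in Lemma \ref{cenm}, namely $\Gamma_n(x)=\frac{1}{n\rho-1}\big[x\{1-2\rho(1-\alpha)\}+1\big]$, the sign is positive at $x=0$ but can in principle become negative for large $x$ when $2\rho(1-\alpha)>1$. The clean way to close the argument is therefore to read $\Gamma_n(x)$ as $|\Gamma_n(x)|$ throughout, or equivalently to work on the range of $x$ where $\Gamma_n(x)\ge 0$; with either interpretation fixed, no further estimation is needed and the asserted inequality follows directly. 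The hypothesis $n\rho>2$ enters only to keep the denominators $(n\rho-1)$ and $(n\rho-2)$ of $\Gamma_n$ and $\Delta_n$ positive, so that the right-hand side is a legitimate nonnegative bound.
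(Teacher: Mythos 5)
Your proof is correct and takes essentially the same route as the paper: a second-order Taylor expansion with Lagrange remainder, application of the operator, and absorption of $\|f^{\prime}\|_{C_B[0,\infty)}$ and $\|f^{\prime\prime}\|_{C_B[0,\infty)}$ into $\|f\|_{C_B^2[0,\infty)}$ — your positivity-first treatment of the $t$-dependent intermediate point is just a more careful rendering of the paper's step of applying $A_{n}^{\alpha,\rho}$ directly to $(t-x)^2 f^{\prime\prime}(\theta)$. Your sign caveat is also well taken: the paper's own intermediate inequality carries $|A_{n}^{\alpha,\rho}(t-x;x)| = |\Gamma_n(x)|$, so the unsigned $\Gamma_n(x)$ in the theorem statement should indeed be read as an absolute value, since $\Gamma_n(x)$ can become negative for large $x$ when $2\rho(1-\alpha)>1$.
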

\vskip0.2in
\begin{proof}
The second order Taylor's polynomial for $f\in C_B^2[0,\infty)$ and $\theta \in (x,t),$ we have:
\begin{eqnarray*}
f(t)=f(x)+(t-x)f^{\prime}(x)+\frac{(t-x)^2}{2}f^{\prime\prime}(\theta).
\end{eqnarray*}
By applying the operators $A_{n}^{\alpha,\rho}(.;x)$ on both sides, we get:
\begin{eqnarray}\label{j2}
A_{n}^{\alpha,\rho}(f;x)&=&f(x)+A_{n}^{\alpha,\rho}(t-x;x)f^{\prime}(x)+\frac{1}{2}A_{n}^{\alpha,\rho}((t-x)^2f^{\prime\prime}(\theta);x)\nonumber\\
|A_{n}^{\alpha,\rho}(f;x)-f(x)|&\leq &|A_{n}^{\alpha,\rho}(t-x;x)|\|f^{\prime}\|_{C_B[0,\infty)}+\frac{1}{2}|A_{n}^{\alpha,\rho}((t-x)^2;x)|\|f^{\prime\prime}\|_{C_B[0,\infty)}.
\end{eqnarray}
Now from equation (\ref{norm}), we obtain the following inequalities,
\begin{eqnarray*}
 \|f^{\prime}\|_{C_B[0,\infty)}\leq \|f\|_{C_B^2[0,\infty)}\,\,\mbox{and}\,\,\,\|f^{\prime\prime}\|_{C_B[0,\infty)} \leq \|f\|_{C_B^2[0,\infty)}.
\end{eqnarray*}
Thus, by using these inequalities in equation (\ref{j2}), we get the required upper bound for the error of approximation.
\end{proof}
\vskip0.3in
\begin{thm}
For every $f\in C_B[0,\infty),$ we have:
 \begin{eqnarray*}
|A_{n}^{\alpha,\rho}(f;x)-f(x)|\leq 2Q\left\{{\omega}_2\left(f;\sqrt{\frac{\Gamma_n(x)}{2}+\frac{\Delta_n(x)}{4}}\right)+min\left(1,\frac{\Gamma_n(x)}{2}+\frac{\Delta_n(x)}{4}\right)\|f\|_{C_B[0,\infty)}\right\},\\
\end{eqnarray*}
where $ Q>0$ is any constant, $\Gamma_n(x)$ and $\Delta_n(x)$ are defined as in Theorem \ref{thm7}.
\end{thm}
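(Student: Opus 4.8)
The plan is to route the estimate through the second-order Peetre $K-$functional of Definition \ref{def3} and then convert back to ${\omega}_2$ by the standard equivalence. Throughout, I would write $\eta_n(x)=\frac{\Gamma_n(x)}{2}+\frac{\Delta_n(x)}{4}$, so that the coefficient appearing in Theorem \ref{thm7} is exactly $\Gamma_n(x)+\frac{\Delta_n(x)}{2}=2\eta_n(x)$; this algebraic coincidence is what forces the combination $\frac{\Gamma_n(x)}{2}+\frac{\Delta_n(x)}{4}$ to appear inside ${\omega}_2$ and inside the $\min$ of the claim.

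First I would fix an arbitrary $g\in C_B^2[0,\infty)$ and split, using the triangle inequality,
\begin{eqnarray*}
|A_{n}^{\alpha,\rho}(f;x)-f(x)| \leq |A_{n}^{\alpha,\rho}(f-g;x)| + |A_{n}^{\alpha,\rho}(g;x)-g(x)| + |(f-g)(x)|.
\end{eqnarray*}
Since $A_{n}^{\alpha,\rho}$ is positive and linear with $A_{n}^{\alpha,\rho}(e_0;x)=1$ (Lemma \ref{mome}), the first and third terms are each dominated by $\|f-g\|_{C_B[0,\infty)}$. For the middle term I would apply Theorem \ref{thm7} to the smooth function $g$, which gives $|A_{n}^{\alpha,\rho}(g;x)-g(x)|\leq 2\eta_n(x)\,\|g\|_{C_B^2[0,\infty)}$. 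Collecting the three pieces yields
\begin{eqnarray*}
|A_{n}^{\alpha,\rho}(f;x)-f(x)| \leq 2\left(\|f-g\|_{C_B[0,\infty)}+\eta_n(x)\,\|g\|_{C_B^2[0,\infty)}\right).
\end{eqnarray*}

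Next, since $g\in C_B^2[0,\infty)$ was arbitrary, I would pass to the infimum over all such $g$ and recognise the right-hand side as $2K_2(f;\eta_n(x))$ in the sense of Definition \ref{def3}. The final step is to invoke the well-known equivalence between the $K-$functional and the second-order modulus of smoothness: there is an absolute constant $Q>0$ with
\begin{eqnarray*}
K_2(f;\delta)\leq Q\left\{{\omega}_2(f;\sqrt{\delta})+\min(1,\delta)\,\|f\|_{C_B[0,\infty)}\right\}, \qquad \delta>0.
\end{eqnarray*}
Setting $\delta=\eta_n(x)$ and combining with the previous display produces the asserted bound, with the factor $2Q$ arising as the product of the $2$ from the splitting step and the $Q$ from the equivalence.

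The routine parts here are the triangle-inequality split and the direct invocation of Theorem \ref{thm7}; the step I expect to be the main obstacle, or at least the only genuinely non-elementary ingredient, is the $K_2$--${\omega}_2$ equivalence with the $\min(1,\delta)\|f\|_{C_B[0,\infty)}$ correction term, since on the unbounded interval $[0,\infty)$ this estimate cannot be taken for granted and is precisely what forces the $\min$ term into the statement. A secondary point requiring care is the sign of $\Gamma_n(x)=A_{n}^{\alpha,\rho}(t-x;x)$ on the range of $x$ considered, so that $\eta_n(x)\geq 0$ and the coefficient furnished by Theorem \ref{thm7} may be used exactly as written.
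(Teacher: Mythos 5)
Your proof is correct and follows essentially the same route as the paper's: a triangle-inequality split with an arbitrary $g\in C_B^2[0,\infty)$, Theorem \ref{thm7} applied to $g$, passage to the infimum to recognise $2K_2\left(f;\frac{\Gamma_n(x)}{2}+\frac{\Delta_n(x)}{4}\right)$, and the $K_2$--$\omega_2$ equivalence to finish. The one difference is to your credit: you bound $|A_{n}^{\alpha,\rho}(f-g;x)|\leq \|f-g\|_{C_B[0,\infty)}$ directly from positivity and $A_{n}^{\alpha,\rho}(e_0;x)=1$, which is the rigorous justification, whereas the paper argues this step via the limit $A_{n}^{\alpha,\rho}(f-g;x)\rightarrow f(x)-g(x)$, a statement about $n\rightarrow\infty$ that does not actually yield the bound for fixed $n$.
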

\vskip0.2in
\begin{proof}
For $g\in C_B^2[0,\infty)$, we have:

\begin{eqnarray}\label{1}
|A_{n}^{\alpha,\rho}(f;x)-f(x)|&=&|A_{n}^{\alpha,\rho}(f;x)-A_{n}^{\alpha,\rho}(g;x)+A_{n}^{\alpha,\rho}(g;x)-g(x)+g(x)-f(x)|\nonumber\\
&\leq &|A_{n}^{\alpha,\rho}(f-g;x)|+|g(x)-f(x)|+|A_{n}^{\alpha,\rho}(g;x)-g(x)|.\\ \nonumber
\end{eqnarray}
Since
\begin{eqnarray*}
A_{n}^{\alpha,\rho}(f;x)\rightarrow f(x),\,\,\qquad \qquad A_{n}^{\alpha,\rho}(g;x)\rightarrow g(x),
\end{eqnarray*}
then
\begin{eqnarray}\label{2}
|A_{n}^{\alpha,\rho}(f-g;x)|\rightarrow |f(x)-g(x)|\leq\|f-g\|_{C_B[0,\infty)}.\\\nonumber
\end{eqnarray}
By using equations (\ref{1}), (\ref{2}) and Theorem \ref{thm7}, we get:
\begin{eqnarray*}
|A_{n}^{\alpha,\rho}(f;x)-f(x)|&\leq &2\|f-g\|_{C_B[0,\infty)}+\left(\Gamma_{n}(x)+\frac{\Delta_n(x)}{2}\right)\|g\|_{C_B[0,\infty)}\\
&=&2\bigg(\|f-g\|_{C_B[0,\infty)}+\left(\frac{\Gamma_{n}(x)}{2}+\frac{\Delta_n(x)}{4}\right)\|g\|_{C_B[0,\infty)}\bigg).
\end{eqnarray*}
Now taking the infimum over all $g\in C_B^2[0,\infty)$ and using the Definition \ref{def3}, we have:
\begin{eqnarray*}
|A_{n}^{\alpha,\rho}(f;x)-f(x)|\leq 2K_2\left(f;\frac{\Gamma_{n}(x)}{2}+\frac{\Delta_n(x)}{4}\right).
\end{eqnarray*}
Also, from \cite{11}, using the equivalence relation of second order $K-$functional and second order modulus of continuity, i.e.
$$K_2(f;\delta)\leq Q \big({\omega}_2(f;\sqrt{\delta})+min(1,\delta)\|f\|_{C_B[0,\infty)}\big), \quad Q > 0$$
Thus:
\begin{eqnarray*}
|A_{n}^{\alpha,\rho}(f;x)-f(x)|\leq 2Q\left\{{\omega}_2\left(f;\sqrt{\frac{\Gamma_{n}(x)}{2}+\frac{\Delta_n(x)}{4}}\right)+min\left(1,\frac{\Gamma_{n}(x)}{2}+\frac{\Delta_n(x)}{4}\right)\|f\|_{C_B[0,\infty)}\right\}.
\end{eqnarray*}
Hence, we obtain the result.
\end{proof}
\vskip0.3in

\section{Numerical Verifications}
\vskip0.2in
In this section, we give the numerical examples to verify the theoretical results that we have proved in the previous section. Here, we compare the Durrmeyer variant of $\alpha-$Baskakov operators with our operators $A_{n}^{\alpha,\rho}(f;x)$ for different values of $\rho.$
\vskip0.3in
\begin{example}
Let $f(x)=\sqrt x$. In this example, we present the convergence of our operators $A_{n}^{\alpha,\rho}(f,x)$ for $n=20, \alpha=0.1$ and certain values of $\rho$ i.e. $\rho=1, 5, 0.5.$
\vskip0.1in
Also, we have given the error of approximation $E_{n}^{\alpha,\rho}(f,x)=|f(x)-A_{n}^{\alpha,\rho}(f;x)|$ of our operators $A_{n}^{\alpha,\rho}(f;x)$ from the function $f(x)$ for $n=20, \alpha=0.1$ and $\rho=1, 5, 0.5.$ \\
\vskip0.1in
From the given figures, we can easily see that the operators $A_{n}^{\alpha,\rho}(f;x)$ give the better approximation at $\rho=0.5.$
\newpage
\begin{center}
{Figure 1. Approximation process for $\alpha=0.1$}\\
\includegraphics[width=0.75\columnwidth]{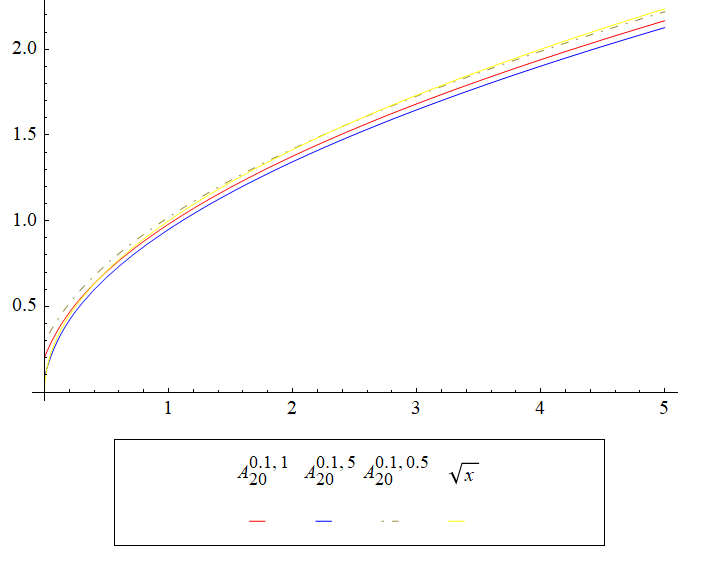}
\end{center}
\vspace{0.3in}
\begin{center}
{Figure 2. Error estimation}\\
\includegraphics[width=0.75\columnwidth]{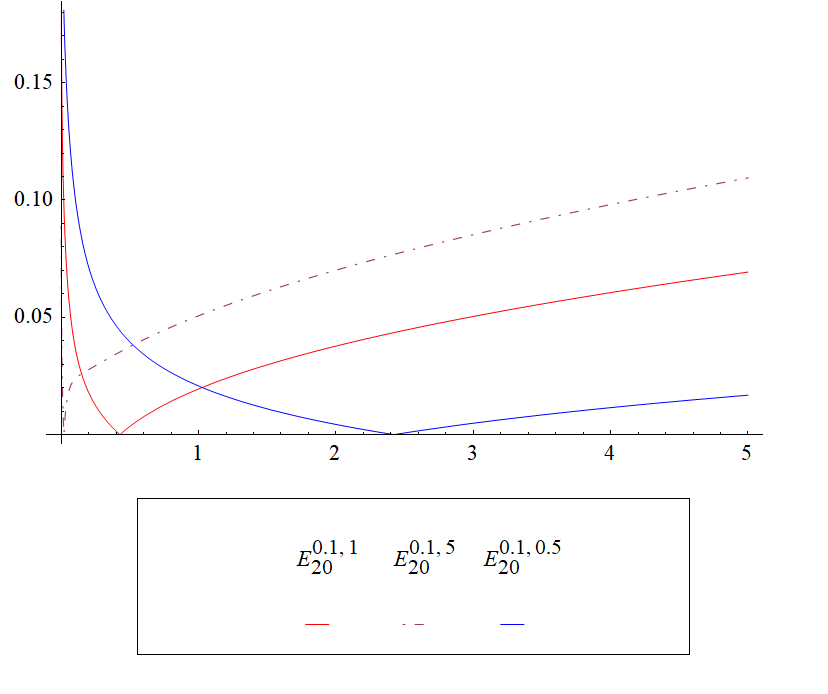}
\end{center}
\newpage
Also, we give the convergence and the error approximation at $\alpha=1$ with $n=20$ and $\rho=1,5,0.5,$ which is shown in figure 3 and figure 4. For these values, it is clear that approximation is better at $\rho=5.$\\
\vskip0.3in
\begin{center}
{Figure 3. Approximation process for $\alpha=1$}\\
\includegraphics[width=0.65\columnwidth]{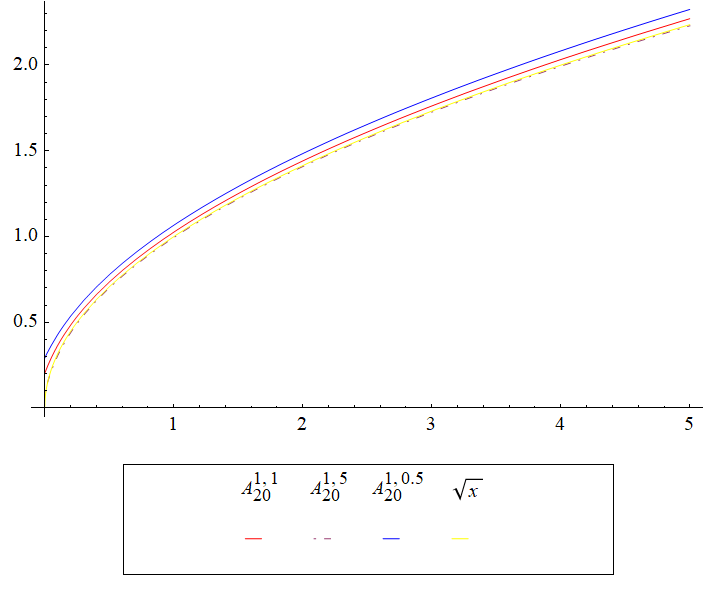}
\end{center}
\vspace{0.3in}
\begin{center}
{Figure 4. Error estimation}\\
\includegraphics[width=0.65\columnwidth]{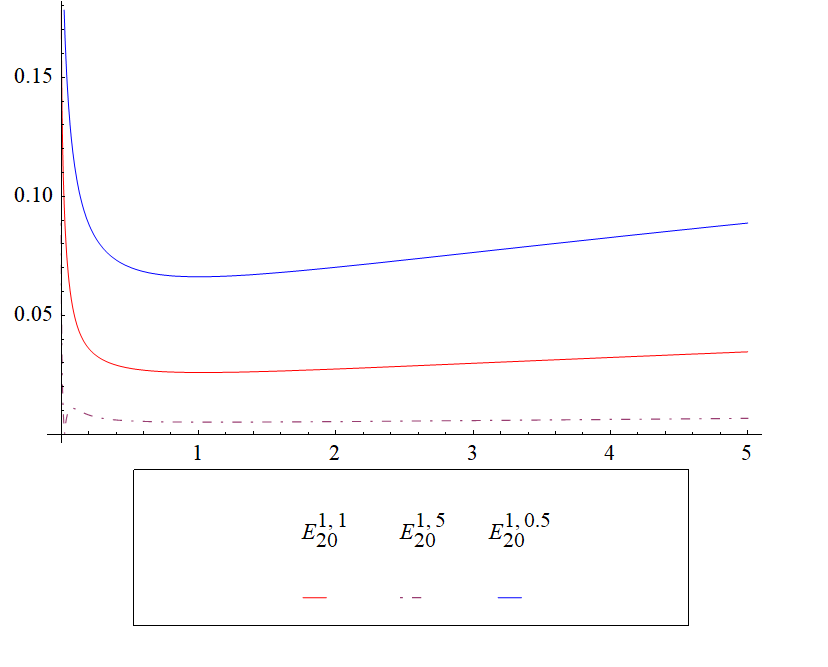}
\end{center}
\end{example}
\newpage
\begin{example}
Let $f(x)=x^2+5x+2$. Here, we give the convergence of our operators $A_{n}^{\alpha,\rho}(f;x)$ at $n=20,\alpha=0.7$ and some values of $\rho=1, 5, 0.3.$
Also, we give the error of approximation $E_{n}^{\alpha,\rho}(f,x)$ of our operators $A_{n}^{\alpha,\rho}(f;x)$ from the function $f(x)$ at the chosen values. 
\vskip0.3in
\begin{center}
{Figure 5. Approximation process for $\alpha=0.7$}\\
\includegraphics[width=0.63\columnwidth]{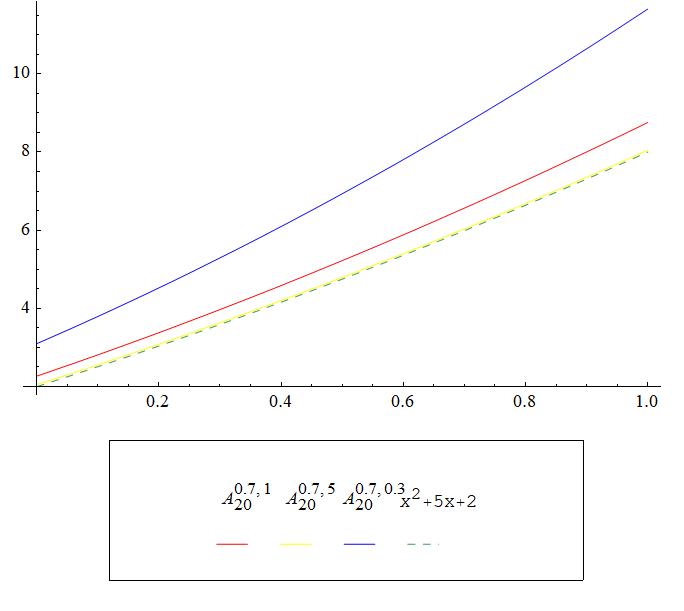}
\end{center}
\vskip0.3in
\begin{center}
{Figure 6. Error estimation}\\
\includegraphics[width=0.63\columnwidth]{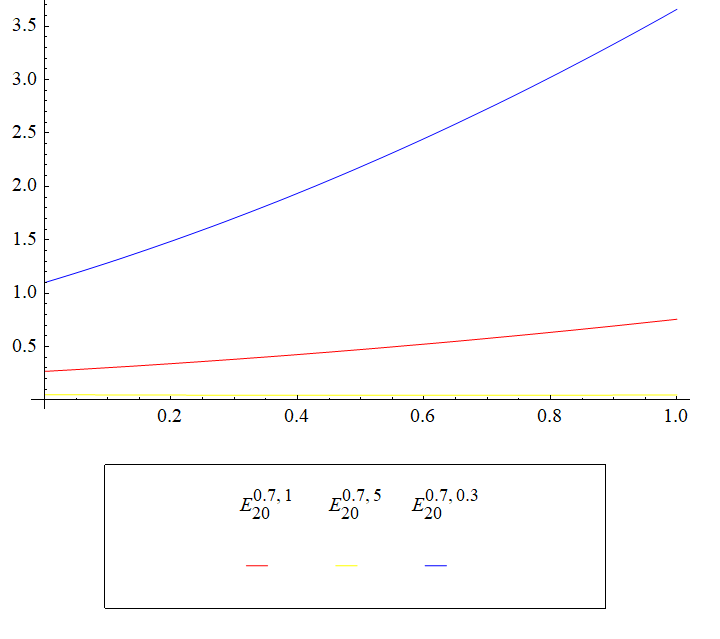}
\end{center}
From the above figures we can easily see that the operators $A_{n}^{\alpha,\rho}(f;x)$ give the better approximation at $\rho=5.$
\end{example}
\vskip0.3in
\textbf{Conclusion}: The operators $A_{n}^{\alpha,\rho}(f;x)$ converge uniformly to continuous bounded function $f(x).$ By these operators, we have verified that generalization of the positive linear operators can provide the better approximation than the classical ones, which is shown graphically through the examples. Here, we have seen that the convergence of the operators $A_{n}^{\alpha,\rho}(f;x)$ is independent of parameters $\alpha$ and $\rho$, but the error of approximation depends on these parameters.


\end{document}